\numberwithin{equation}{section}
\newcommand{\Gr}{\mathrm{Gr}}
\newcommand{\C}{\mathbb{C}}
\newcommand{\ol}{\overline}
\newcommand{\Addresses}{{
  \bigskip
  \footnotesize

  K.~Timchenko, \textsc{Department of Mathematics, University of Notre Dame, Notre Dame, IN 46556}\par\nopagebreak
  \textit{E-mail address:} \texttt{ktimchen@nd.edu}

}}
\date{}
\begin{document}

\title{Characteristic cycles for K-orbits on Grassmannians}
\author{Kostiantyn Timchenko}

\maketitle

\begin{abstract}
We compute characteristic cycles of the IC-sheaves associated to the $K$-orbits on Grassmannians.
\end{abstract}

\newtheorem{theorem}{Theorem}
\newtheorem{lemma}[theorem]{Lemma}
\newtheorem{proposition}[theorem]{Proposition}
\newtheorem{corollary}[theorem]{Corollary}
\newtheorem{conjecture}[theorem]{Conjecture}

\theoremstyle{definition}
\newtheorem{definition}[theorem]{Definition}
\newtheorem{question}[theorem]{Question}
\newtheorem{comment}[theorem]{Comment}
\newtheorem{remark}[theorem]{Remark}
\newtheorem{example}[theorem]{Example}
\newtheorem*{thm1}{Theorem 1}

\section{Introduction}
The purpose of this paper is to establish new results about characteristic cycles for orbit closures of interest in geometric representation theory. 
Let $G$ be a complex group acting on a smooth projective variety $X$ with a finite number of orbits. Given a $G$-orbit $Q$, we denote by $\mathcal{L}_Q$ the IC-extension of the trivial local system on $Q$ and by $T_{Q}^*X$ the conormal bundle to the orbit $Q$. We write the characteristic cycle of $\mathcal{L}_Q$ as follows
\begin{equation}
CC(\mathcal{L}_Q) = \overline{T_{Q}^*X} + \sum _{Q\neq Q'\subset \overline{Q}} m_{Q',Q} \, \overline{T_{Q'}^*X}
\end{equation}
for some non-negative numbers $m_{Q',Q}$. We say that $CC(\mathcal{L}_Q)$ is irreducible if $CC(\mathcal{L}_Q)=\overline{T_{Q}^*X}$, i.e., all the $m_{Q',Q}=0$. The determination of these numbers is a difficult problem.

One instance of this problem is when $X$ is a complete flag variety of a complex reductive group $G$ and $Q$ is a Schubert cell, in which case $X=G/B$ and $Q$ is a $B$-orbit.
 In a 1980 paper, Kazhdan and Lusztig \cite{KL} conjectured that in type A, the characteristic cycle of the IC-sheaf $\mathcal{L}_Q$ is irreducible. In 1997, Kashiwara and Saito \cite{KSa} provided a counterexample to this conjecture for $G=GL(8)$.  More recently, Williamson \cite{W} showed that  $m_{Q',Q}$ could be non-zero even when $Q'$ and $Q$ correspond to the elements of the Weyl group that lie in the same two-sided cell.

A special case of this conjecture was proved by Bressler-Finkelberg-Lunts \cite{BFL}, who showed that the statement is true when the flag manifold is replaced by the Grassmannian. Using different methods, this was later generalized by Boe-Fu \cite{BF} to the case of Schubert varieties in the classical homogeneous Hermitian spaces.

In this paper, we switch our attention to $K$-orbits on Grassmannians. More precisely, let $\theta$ be an involution on the group $GL(n)$ and let $K$ be the  identity component of the fixed set of $\theta$. In this case, $K$ belongs to one of the three families: $GL(p)\times GL(q)$, $Sp(n)$ and $SO(n)$. $Sp(n)$ exists only for $n$ even. 
Each $K$ acts on $Gr(k,n)$ with a finite number of orbits. The $GL(p)\times GL(q)$-orbits $Q(s,t)$ are parametrized by the pairs of numbers $(s,t)$, where $s\leq p$, $t\leq q$ and $s+t\leq k$ (see \ref{definition of the pq orbits}). The $Sp(n)$-orbits are denoted by $Q(i)$, where $i=k(\text{mod} \, 2)$ and $0\leq i \leq k$. The $SO(n)$-obits are denoted by $Q(i)$, for $i=0,\dots,k$ (see \ref{definition of Qi orbits}). We prove the following: 

\begin{theorem}\label{thm1}
 Let $X$ be the Grassmannian $Gr(k,n)$. The characteristic cycles of the IC-sheaves associated to the $K$-orbits are:
 \begin{enumerate}
 \item[a)] irreducible for $K=GL(p)\times GL(q) $, $p+q=n$, and $ Sp(n)$,
 \item[b)] not irreducible in general for $K=SO(n)$. Consider the $SO(n)$-orbits $Q(0), Q(1),\dots, Q(k)$ on $Gr(k,n)$. Then $CC(\mathcal{L}_{Q(i)})$ is irreducible for $i$ even or $i=k$. When $i$ is odd, $CC(\mathcal{L}_{Q(i)})$ has two components (with the exception of $CC(\mathcal{L}_{Q(k-1)})$ in $ Gr(k,2k)$, $k$ is even,  that has three connected components).
 \end{enumerate}
\end{theorem}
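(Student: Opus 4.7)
The plan is to treat the three families of $K$ separately, building for each orbit $Q$ a resolution $\pi : \widetilde{\overline{Q}} \to \overline{Q}$ and extracting the characteristic cycle from the decomposition theorem together with the functoriality of $CC$ under proper pushforward. Throughout, the key input is that a small resolution yields $R\pi_* \mathbb{Q}_{\widetilde{\overline{Q}}}[\dim Q] \cong \mathcal{L}_Q$ as a single IC-summand, so $CC(\mathcal{L}_Q) = \pi_* \overline{T^*_{\widetilde{\overline{Q}}} \widetilde{\overline{Q}}}$ is the single conormal $\overline{T^*_Q X}$; when smallness fails, the extra direct summands appearing in the decomposition theorem translate into extra conormal components.

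For part (a), I would construct a small resolution of each orbit closure by remembering the linear-algebra data that a generic point of $Q$ carries but is lost at the singular strata. For $GL(p)\times GL(q)$ and the orbit $\overline{Q(s,t)}$, the natural candidate is the incidence variety of triples $(V, A, B)$ with $A\subseteq V\cap \mathbb{C}^p$ of dimension $s$ and $B\subseteq V\cap \mathbb{C}^q$ of dimension $t$; this is smooth as a tower of Grassmann bundles, and the fiber over a smaller orbit $Q(s',t')$ is a product $\mathrm{Gr}(s,s')\times \mathrm{Gr}(t,t')$, whose dimension must be checked to satisfy the strict smallness inequality $2\dim \pi^{-1}(x) < \mathrm{codim}(Q' \subset \overline{Q})$. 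In the $Sp(n)$ case one pairs $V$ with a subspace lying in the radical of the restricted symplectic form, giving an analogous Grassmann bundle over an isotropic Grassmannian; the same style of dimension count verifies smallness. In both cases, irreducibility of $CC(\mathcal{L}_Q)$ follows immediately.

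For part (b), I would run the same construction for the $SO(n)$-orbit closures $\overline{Q(i)}$, pairing $V$ with a subspace of the radical of the restricted symmetric form. The crucial point is that the smallness inequality depends on the parity of $i$: it holds when $i$ is even and trivially when $i = k$ (the fiber being a point), but fails by exactly $1$ when $i$ is odd with $i < k$. Over the specific smaller orbit $Q(j)$ where smallness breaks, a generic fiber is a positive-dimensional isotropic Grassmannian, so the decomposition theorem produces an additional direct summand of the form $\mathcal{L}_{Q(j)}$ in $R\pi_*\mathbb{Q}_{\widetilde{\overline{Q(i)}}}$; applying $CC$ this summand contributes the extra conormal $\overline{T^*_{Q(j)} X}$, giving the two-component formula for $CC(\mathcal{L}_{Q(i)})$.

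The main obstacle is the exceptional case $X = Gr(k, 2k)$ with $k$ even and $i = k-1$. Here the relevant fiber over the smaller orbit is a middle-dimensional orthogonal Grassmannian of maximal isotropic subspaces, which famously splits into two connected components (the two spinor varieties). Each component contributes a distinct direct summand to $R\pi_*\mathbb{Q}$, producing two extra summands rather than one, and hence three conormal bundles in $CC(\mathcal{L}_{Q(k-1)})$ in total. Verifying this cleanly requires tracking the two families of maximal isotropic subspaces through the resolution and confirming that each gives rise to an independent summand; a sanity check via Kashiwara's microlocal index formula, equating the stalk Euler characteristic of $\mathcal{L}_{Q(k-1)}$ at a point of each smaller orbit with the resulting combination of local Euler obstructions, will then pin down all three multiplicities.
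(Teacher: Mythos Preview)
For $K = GL(p) \times GL(q)$ your outline is close in spirit to the paper's, with two gaps. First, the resolution you describe is the Barbasch--Evens space $Z(s,t)$, and the smallness inequality you plan to verify holds only when $n-k \ge p$; for $n-k < p$ it fails, and the paper switches to a second resolution $\tilde Z(s,t)$ (recording an overspace $V \supset U + \mathbb{C}^p$ rather than a subspace of $U \cap \mathbb{C}^p$) to cover that range. Second, the step ``small resolution $\Rightarrow$ $\pi_*[T^*_Z Z] = [\overline{T^*_Q X}]$'' is not the tautology you make it: smallness gives $R\pi_*\mathbb{Q}[\dim] \cong \mathcal{L}_Q$, but the Lagrangian pushforward itself must still be computed. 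The paper (following Bressler--Finkelberg--Lunts) does this by computing the codifferential of each resolution explicitly and showing that for every smaller orbit $Q'$ the cone $\bigcup_{z \in \pi^{-1}(x)} \ker d^*_z\pi$ is a proper subvariety of the conormal fiber $(T^*_{Q'}X)_x$; this microlocal-fiber emptiness is the actual content of their Section~2.

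For $K = Sp(n)$ and $SO(n)$ your approach diverges entirely from the paper's, and here the gap is fatal. The paper notes that the analogous resolutions $Z_i \to \overline{Q(i)}$ are \emph{not small in general}; indeed, over $Q(j)$ the fiber is $\mathrm{Gr}(i,j)$, whose dimension exceeds half the codimension as soon as $i \ge 1$, and for $SO$ the map is not even semi-small once $i \ge 2$. So neither your smallness claim for $Sp$ nor your semi-small decomposition-theorem plan for $SO$ gets off the ground. The paper instead realizes the orbits as degeneracy loci: the form $B$ defines a section $s$ of $Hom(\mathcal{S}, \mathcal{S}^*)$ valued in (skew-)symmetric forms, with $Q(i) = s^{-1}(\{\text{rank} = k-i\})$; a direct matrix computation shows $s$ is transverse to the rank stratification, hence non-characteristic, and one simply pulls back the known characteristic cycles of (skew-)symmetric rank strata (Raicu, L\H{o}rincz--Walther). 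This yields both the $Sp$ irreducibility and the exact $SO$ formula, including the three-component exception, in one stroke. Note also that your decomposition-theorem bookkeeping is inverted: an extra summand $\mathcal{L}_{Q(j)}$ in $R\pi_*\mathbb{Q}$ means its $CC$ is \emph{subtracted} from $\pi_*[T^*_Z Z]$ to isolate $CC(\mathcal{L}_{Q(i)})$, so even granting semi-smallness you would still owe an independent computation of the pushforward with multiplicities.
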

 
 This paper is organized as follows. In the following section we discuss the geometry of $GL(p)\times GL(q)$-orbits in the Grassmannian. In particular, we describe the small resolutions of closures of these orbits.  We then review the notion of a microlocal fiber and prove that the characteristic cycles associated to the $GL(p)\times GL(q)$-orbits are irreducible (Theorem \ref{pq theorem}). Our strategy follows the lines of Bressler-Finkelberg-Lunts argument: we show that the  microlocal fiber of the resolution is empty (outside of a dense orbit).

 In Section \ref{section so and sp} we discuss the orbits of $SO(n)$ and $Sp(n)$ on the Grassmannian $Gr(k,n)$. We use the language of degeneracy loci to connect their geometry to the geometry of symmetric and skew-symmetric matrix rank stratifications. The characteristic cycles are known in the latter case, this allows us to translate the statements about characteristic cycles from the matrix side to the $K$-orbit side (Theorem \ref{so sp orbits theorem}).

\section{Orbits for the General Linear Groups} \label{section pq orbits}
We start with the case of a pair $(GL(n), GL(p)\times GL(q))$ for $p+q=n$. We follow the notation from \cite{BE}. Fix a decomposition of $\C^n=\C^p\oplus \C^q$. Here, $GL(p)$ acts on the first summand, $\C^p$, and $GL(q)$ acts on the second, $\C^q$. We write $Gr(i,n)$ for the Grassmannian of $i$-planes in $\C^p$. For the rest of the section we assume that $p\geq q$ and $n-k\geq k$. 

$GL(n)$ acts on the Grassmannian $Gr(k,n)$. This gives an action of $K$ on $Gr(k,n)$. The $K$ - orbits are the subsets \begin{equation}\label{definition of the pq orbits}
Q(s,t) = \{ U\in Gr(k,n) | \dim U\cap \C^p = s, \dim U\cap C^q = t \},
\end{equation}
where $s+t\leq k$, $s\leq p$ and $t\leq q$. Note that $Q(s',t')\subset \ol{Q(s,t)}$ if and only if $s'\geq s$ and $t'\geq t$.

In \cite{BE}, Barbasch and Evens introduce two resolutions of singularities for $\ol{Q(s',t')}$. First, consider
\begin{equation}
Z(s,t) = \{ (U,V,W)\in Gr(k,n)\times Gr(s,p) \times Gr(t,q) |  V\subset \C^p\cap U, W\subset \C^q\cap U \}.
\end{equation}
Barbasch and Evens show that the projection on the first factor $\theta: Z\to \ol{Q(s,t)} $ is a resolution of singularities. For $n-k\geq p$ this resolution is small.

To deal with the $n-k<p$ case another incidence correspondence is used. Consider
\begin{equation}
\tilde{Z}(s,t) = \{ (U,V,W)\in Gr(k,n)\times Gr(k+p-s,n)\times Gr(t,q) |  U+{\C^p} \subset V, W\subset U\cap \C^q \}.
\end{equation}

As before, the projection $\tilde{\theta}:\tilde{Z} \to \ol{Q(s,t)} $ is a resolution of singularities. Assuming $n-k\leq p$, this resolution is small.

\subsection{Characteristic Cycles}\label{sec char cycles}
In \cite{BFL} , Bressler, Finkelberg and Lunts use Zelevinsky's resolutions for Schubert varieties to prove the irreducibility of the characteristic cycles. We briefly review their strategy.
As in the introduction, we can write the characteristic cycle associated to an orbit as follows
\begin{equation}
CC(\mathcal{L}_Q) = \overline{T_{Q}^*X}+ \Sigma _{Q'\subset \overline{Q}} m_{Q',Q} \overline{T_{Q'}^*X}.
\end{equation}
Following Brylinski \cite{Br}  the multiplicity $m_{Q',Q}$ can be computed in terms of vanishing cycles. Take a general point $(x,\xi)\in T^*_{Q'}X$ and a function $f$ on $X$, vanishing along $Q'$ and satisfying $d_x f=\xi$. Let $\Phi_f $ denote the functor of vanishing cycles  with respect to $f$. Then the multiplicity
\begin{equation}
m_{Q'Q} = \chi_x(\Phi_f \mathcal{L}_{Q}),
\end{equation}
where $\chi_x$ is the stalk Euler characteristic at $x$.
 
Suppose we are in the situation where $Q$ admits a (proper and) small resolution of singularities $\theta: Z\to \ol{Q}$. Now 
\begin{equation}
m_{Q'Q} = \chi_x(\Phi_f \mathcal{L}_{Q}) = \chi_x(\Phi_f R\theta _* \C) = \chi_x(R\theta _* \Phi_{f \circ \theta}\C).
\end{equation}
The second equality follows from the map $\theta$ being small and the third by the proper base change.

Now recall the definition of a microlocal fiber. For $z\in Z$, the codifferential of $\theta$ at $z$ is $d_z^{*}\theta:T^*_{\theta(z)} \ol{Q} \to T^*_{z} Z$. 
\begin{definition}
The microlocal fiber of $\theta$ over $( x,\xi)$ is the set of points $z\in \theta^{-1}(x)$ such that $d_z^{*}\theta (\xi) = 0$.
\end{definition}
The support of the complex $\Phi_{f \circ \theta}\C|_{\theta^{-1}(x)}$ is contained in the microlocal fiber over $(x,\xi)$[KS]. 
\begin{remark}\label{remark}
If the microlocal fiber of $\theta$ over a general point $(x,\xi)$ is empty, then the multiplicity $m_{Q'Q}$ is zero.
\end{remark}

We return to the setup of (\ref{section pq orbits}). Consider a $K$-orbit $Q(s,t)$. Let $\theta:Z\to Gr(k,n)$ be one of the two small resolutions of $\ol{Q(s,t)}$.  We are going to prove the following 
\begin{proposition}\label{prop cone is empty}
 For any proper $K$-orbit $Q'\subset \ol{Q(s,t)}$ and any sufficiently general $(x,\xi)$ in $T_{Q'}^*Gr(k,n)$ the microlocal fiber of $\theta$ over $(x,\xi)$ is empty (equivalently the cone $\cup _{z\in \theta^{-1}(x)} \ker d_z^{*}\theta$ is a proper subvariety of $T_{Q'}^*Gr(k,n)_x$).
\end{proposition}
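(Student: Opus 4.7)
The plan is to reformulate the statement as a dimension count and deduce it from the smallness of $\theta$. Fix a point $x = U \in Q'$ and write $F = \theta^{-1}(U)$. Since $\ker d_z^*\theta = (\mathrm{im}\, d_z\theta)^\perp$ inside $T^*_U Gr(k,n)$, once we know that $T_U Q' \subset \mathrm{im}\, d_z\theta$ for every $z \in F$, each kernel lies in the conormal fiber $T^*_{Q'}Gr(k,n)_U = (T_U Q')^\perp$, and the union of the kernels is a proper subvariety whenever its dimension is strictly smaller than $\dim T^*_{Q'}Gr(k,n)_U$.

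The first step is a deformation-theoretic computation. Write $A = (\mathbb{C}^p+U)/U$ and $B = (\mathbb{C}^q+U)/U$ as subspaces of $\mathbb{C}^n/U$, and identify $T_U Gr(k,n) = \mathrm{Hom}(U,\mathbb{C}^n/U)$. A first-order analysis of the incidence conditions cutting out $Z(s,t)$, together with the compatibility of $\phi_V$ and $\phi_W$ with $\phi_U$ under the inclusions $V\subset \mathbb{C}^p\cap U$ and $W\subset \mathbb{C}^q\cap U$, yields
\[
\mathrm{im}\, d_z\theta \;=\; \{\phi\in\mathrm{Hom}(U,\mathbb{C}^n/U) : \phi(V)\subset A,\ \phi(W)\subset B\}
\]
at $z=(U,V,W)\in Z(s,t)$, while the parallel analysis for $\tilde Z(s,t)$ gives
\[
\mathrm{im}\, d_z\tilde\theta \;=\; \{\phi : \phi(\mathbb{C}^p\cap U)\subset V/U,\ \phi(W)\subset B\}.
\]
Applied to $Gr(k,n)$ itself, the same procedure identifies $T_U Q(s',t') = \{\phi : \phi(\mathbb{C}^p\cap U)\subset A,\ \phi(\mathbb{C}^q\cap U)\subset B\}$. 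The containments $V\subset \mathbb{C}^p\cap U$ and $W\subset \mathbb{C}^q\cap U$ (respectively $A\subset V/U$ in the second resolution) then give $T_U Q'\subset \mathrm{im}\, d_z\theta$ for every lift $z$.

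The second step is the dimension count. $Z$ is smooth of dimension $\dim\overline{Q(s,t)}$ and the fiber $F$ is a product of Grassmannians, hence smooth, so $\dim\ker d_z\theta = \dim F$ and therefore $\dim\mathrm{im}\, d_z\theta = \dim\overline{Q(s,t)}-\dim F$ for every $z\in F$. Consequently
\[
\dim \Big(\bigcup_{z\in F}\ker d_z^*\theta\Big) \;\le\; \dim F + \dim\ker d_z^*\theta \;=\; 2\dim F + \dim T^*_U Gr(k,n) - \dim\overline{Q(s,t)},
\]
whereas $\dim T^*_{Q'}Gr(k,n)_U = \dim T^*_U Gr(k,n) - \dim Q'$. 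The union is therefore a proper subvariety as soon as $2\dim F < \dim\overline{Q(s,t)} - \dim Q'$, which is precisely the smallness of $\theta$ evaluated on the stratum $Q'$.

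The main obstacle is the deformation-theoretic identification of $\mathrm{im}\, d_z\theta$: one has to unwind the first-order versions of all of the incidence conditions defining $Z$ (resp.\ $\tilde Z$) and check that a $\phi_V$, $\phi_W$ lifting a prescribed $\phi_U$ exists exactly under the constraints above, via the surjections $\mathbb{C}^p/V \twoheadrightarrow A$ and $\mathbb{C}^q/W \twoheadrightarrow B$ (resp.\ the quotient $\mathbb{C}^n/U\twoheadrightarrow \mathbb{C}^n/V$). Once that description is in place, the inclusion $T_U Q'\subset \mathrm{im}\, d_z\theta$ and the dimension bookkeeping above are essentially formal.
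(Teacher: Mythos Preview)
Your argument is correct, but it proceeds quite differently from the paper's. The paper computes the kernels $\ker d_z^*\theta$ explicitly (its Lemma on codifferentials) and then runs a \emph{rank} argument: every element of $\bigcup_{z\in\theta^{-1}(U)}\ker d_z^*\theta$ is shown to be a matrix of rank at most $s+t$ (for $Z(s,t)$) or $n-k-p+s+t$ (for $\tilde Z(s,t)$), strictly less than the generic rank $s'+t'$ (resp.\ $n-k-p+s'+t'$) attained in the conormal fiber $T^*_{Q'}Gr(k,n)_U$. Your route is instead a \emph{dimension} count: you identify $\mathrm{im}\,d_z\theta$, check $T_UQ'\subset\mathrm{im}\,d_z\theta$ so that each kernel sits inside the conormal fiber, and then bound $\dim\bigl(\bigcup_z\ker d_z^*\theta\bigr)\le 2\dim F + k(n-k)-\dim\overline{Q(s,t)}$, which is strictly less than $\dim T^*_{Q'}Gr(k,n)_U$ exactly by smallness. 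Your approach is more conceptual---it makes transparent that smallness of the resolution is the only real input once the containment $T_UQ'\subset\mathrm{im}\,d_z\theta$ is established, and it would apply verbatim to any small resolution with that property. The paper's rank argument is more hands-on and sidesteps the need to pin down $\mathrm{im}\,d_z\theta$ exactly.

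One small gap to patch: the step ``$F$ is a product of Grassmannians, hence smooth, so $\dim\ker d_z\theta=\dim F$'' does not follow from smoothness of $F$ alone (smoothness only gives $T_zF\subset\ker d_z\theta$). You can fix this in two ways. Either read off $\dim\mathrm{im}\,d_z\theta$ from your explicit description---it depends only on $\dim V,\dim W,\dim A,\dim B$, hence is constant on $F$, and a direct count gives $\dim\mathrm{im}\,d_z\theta=\dim\overline{Q(s,t)}-\dim F$---or argue directly that a tangent vector to $Z$ with $\phi_U=0$ forces $\phi_V$ to land in $(U\cap\mathbb{C}^p)/V$ and $\phi_W$ in $(U\cap\mathbb{C}^q)/W$, i.e.\ is tangent to $F$. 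Either way the equality $\dim\ker d_z\theta=\dim F$ holds and your bound goes through.
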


We compute the codifferentials first. As usual, we identify the tangent space to a Grassmannian $T_U Gr(k,n)$ with the vector space $Hom(U,\C^n/U)$. Similarly, we identify $T^*_U Gr(k,n) = Hom(\C^n/U,U)$.
\begin{lemma}\label{codifferential}
\begin{itemize}
\item
For $\theta : Z(s,t)\to Gr(k,n)$ and a  triple $(U,V,W)$ in $Z_{(s,t)}$, then $\ker d_{(U,V,W)}^*\theta$ is 
\begin{equation}
\{h+l\in  Hom(\C^n/U,U) \, | \, h(\C^n) \subset V, h|_{\C^p} =0, l(\C^n) \subset W, l|_{\C^q} =0 \}.
\end{equation}
\item
For $\tilde{\theta}: \tilde{Z}(s,t)\to Gr(k,n)$ and a  triple $(U,V,W)$ in $\tilde{Z}_{(s,t)}$, then  $\ker d_{(U,V,W)}^*\tilde{\theta}$ is 
\begin{equation}
\{h+l\in  Hom(\C^n/U,U) \, | \, h|_V = 0, h (\C^n)\subset \C^q , l(\C^n) \subset W, l|_{\C^q} =0 \}.
\end{equation}
\end{itemize}
\end{lemma}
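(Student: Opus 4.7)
The lemma is a straightforward linear-algebra calculation: identify the tangent and cotangent spaces of the Grassmannian and of the incidence varieties, compute the image of the differential by infinitesimally imposing the defining incidence relations on the first-factor data, and dualize via the trace pairing to read off $\ker d^*\theta$.

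First I would fix the identifications $T_U Gr(k,n) = \mathrm{Hom}(U,\mathbb{C}^n/U)$ and $T^*_U Gr(k,n) = \mathrm{Hom}(\mathbb{C}^n/U,U)$ with duality $\langle\phi,\xi\rangle = \mathrm{tr}(\xi\phi)$. A direct block-matrix computation gives the key dualization lemma: for any $A\subset U$ and $B\subset\mathbb{C}^n/U$, the annihilator of $\{\phi\in\mathrm{Hom}(U,\mathbb{C}^n/U) : \phi(A)\subset B\}$ equals $\{\xi\in\mathrm{Hom}(\mathbb{C}^n/U,U) : \xi|_B=0,\ \mathrm{Im}(\xi)\subset A\}$. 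Since $\mathrm{Im}(d\theta)$ and $\mathrm{Im}(d\tilde\theta)$ will each be cut out by two such conditions, and the annihilator of an intersection is the sum of the two annihilators, the $\ker d^*\theta$ will be a direct sum of two summands $h+l$, as stated in the lemma.

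Next I would compute $\mathrm{Im}(d\theta)$ for the first resolution. A tangent vector at $(U,V,W)\in Z(s,t)$ is a triple $(\phi,\alpha,\beta)$ with $\alpha\in\mathrm{Hom}(V,\mathbb{C}^p/V)$ and $\beta\in\mathrm{Hom}(W,\mathbb{C}^q/W)$; the incidence $V\subset U$, expanded to first order, forces $\tilde\alpha(v)\equiv\tilde\phi(v)\pmod U$ for lifts $\tilde\alpha,\tilde\phi$, and similarly for $W\subset U$. Projecting to the first factor these conditions eliminate $\alpha,\beta$ and leave precisely $\phi(V)\subset(\mathbb{C}^p+U)/U$ and $\phi(W)\subset(\mathbb{C}^q+U)/U$; conversely any such $\phi$ lifts. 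Applying the annihilator lemma with $(A,B)=(V,(\mathbb{C}^p+U)/U)$ and $(A,B)=(W,(\mathbb{C}^q+U)/U)$, noting that $\xi|_{(\mathbb{C}^p+U)/U}=0$ is the same as $\xi|_{\mathbb{C}^p}=0$ (and likewise for $\mathbb{C}^q$), yields the first formula. For $\tilde Z(s,t)$ the triple is $(\phi,\gamma,\beta)$ with $\gamma\in\mathrm{Hom}(V,\mathbb{C}^n/V)$; the relation $U\subset V$ forces $\gamma|_U=\pi\circ\phi$ with $\pi:\mathbb{C}^n/U\to\mathbb{C}^n/V$, the fixed inclusion $\mathbb{C}^p\subset V$ forces $\gamma|_{\mathbb{C}^p}=0$, and their consistency on $U\cap\mathbb{C}^p$ is precisely $\phi(U\cap\mathbb{C}^p)\subset V/U$; the $W$-side gives $\phi(W)\subset(\mathbb{C}^q+U)/U$ as before. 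Dualizing by the same lemma produces the second formula, with $h$ dual to the $V$-constraint and $l$ dual to the $W$-constraint.

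\textbf{Main obstacle.} The delicate bookkeeping is for $\tilde\theta$, where $V$ plays the opposite role from in $Z(s,t)$ (now $V\supseteq U+\mathbb{C}^p$ rather than $V\subseteq\mathbb{C}^p\cap U$). One must verify that the two specifications of $\gamma$ combine to produce only the single consistency condition $\phi(U\cap\mathbb{C}^p)\subset V/U$ on $\phi$, rather than something stronger, and that the freedom in extending $\gamma$ beyond $U+\mathbb{C}^p$ adds no further constraints; both amount to a careful dimension-count plus the observation that $\gamma$ can be chosen arbitrarily on a complement of $U+\mathbb{C}^p$ in $V$. Once $\mathrm{Im}(d\tilde\theta)$ is correctly identified, the dualization is formal.
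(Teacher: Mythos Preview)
Your approach is correct but proceeds differently from the paper. The paper exploits the fiber-product factorizations $Z(s,t)=Z(s,0)\times_{Gr(k,n)}Z(0,t)$ and $\tilde Z(s,t)=\tilde Z(s,0)\times_{Gr(k,n)}Z(0,t)$: the induced square on cotangent spaces is co-Cartesian, so $\ker d^*\theta_{(s,t)}=\ker d^*\theta_{(s,0)}+\ker d^*\theta_{(0,t)}$, and each summand is then read off from \cite{BFL}, Lemma~3.1(a). You instead compute $\mathrm{Im}(d\theta)$ directly by linearizing both incidence relations simultaneously and then dualize via your annihilator lemma; your observation that the annihilator of an intersection is the sum of the annihilators is the elementary substitute for the co-Cartesian square argument. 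Your route is more self-contained (no appeal to \cite{BFL}) and makes the linear algebra explicit; the paper's route is shorter once the external lemma is granted and isolates the product structure of the resolutions cleanly.

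One point to watch: if you actually carry your dualization through for $\tilde\theta$, the annihilator lemma applied to $(A,B)=(U\cap\mathbb{C}^p,\,V/U)$ yields the $h$-summand $\{h:h|_V=0,\ \mathrm{Im}(h)\subset U\cap\mathbb{C}^p\}$, whereas the displayed formula in the lemma reads $h(\mathbb{C}^n)\subset\mathbb{C}^q$. These are genuinely different conditions (check for instance $n=4$, $p=q=k=2$, $s=1$, $U=\langle e_1,e_3\rangle$, $V=\langle e_1,e_2,e_3\rangle$, where $\ker d^*\tilde\theta_{(1,0)}$ consists of $h$ with $h(e_2)=0$, $h(e_4)\in\langle e_1\rangle$). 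Your computation gives the correct description; the discrepancy is harmless for the application in the paper, since only the kernel condition $h|_V=0$ is used afterwards, but you should not assert that your method ``produces the second formula'' verbatim without noting this.
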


\begin{proof}
a) Consider the case of $Z(s,t)$ first. We are interested in the codifferential of the map $\theta=\theta_{(s,t)}: Z(s,t)\to Gr(k,n) $. Note that we have $Z(s,t)=Z(s,0)\times_{Gr(k,n)} Z(0,t)$ and  $\theta$ fits into the Cartesian diagram

\[
  \begin{tikzcd}
    Z_{(s,t)} \arrow{r}{} \arrow{d}{} & Z_{(s,0)} \arrow{d}{\theta_{(s,0)}} \\
    Z_{(0,t)} \arrow{r}{\theta_{(0,t)}} & Gr(k,n)
  \end{tikzcd}
\]
where the maps $Z_{(s,t)}\to Z_{(s,0)}$ and $Z_{(s,t)} \to Z_{(0,t)}$ forget $W$ and $V$ respectively. 
Fix a triple $(U,V,W)$ in $Z_{(s,t)}$. On the level of cotangent spaces this diagram becomes 
\[
  \begin{tikzcd}
    T^*_{(U,V,W)}Z_{(s,t)}  & T^*_{(U,V)}Z_{(s,0)}\arrow{l}{f} \\
   T^*_{(U,W)} Z_{(0,t)} \arrow{u}{} & T^*_{U}Gr(k,n) \arrow{l}{d^*_{(U,W)}\theta_{(0,t)}} \arrow{u}{d^*_{(U,V)}\theta_{(s,0)} }
  \end{tikzcd}
\]
Now $d^*_{(U,V,W)}\theta = f\circ d^*_{(U,V)}\theta_{(s,0)} $ and $\ker (f\circ d^*_{(U,V)}\theta_{(s,0)}) = \ker d^*_{(U,V)}\theta_{(s,0)} +\ker d^*_{(U,W)}\theta_{(0,t)}$ because the square is co-Cartesian.

Note that $Z_{(s,0)}$ is the incidence correspondence $\{ (U,V)| V\subset U  \} $ in $Gr(k,n)\times Gr(s,p)$ and the map $\theta_{(s,0)}$ is a projection $(U,V)\mapsto U$. 
 Now by [BFL, Lemma 3.1 a)] ("rightmost peak" case)
\begin{equation}
\ker d_{(U,V)}^*\theta_{(s,0)} = \{ h\in Hom(\C^n/U,U) \, | \, h(\C^n) \subset V, h|_{\C^p} =0 \}.
\end{equation}
In the same manner 
\begin{equation}
\ker d_{(U,W)}^*\theta_{(0,t)} = \{ l\in Hom(\C^n/U,U) \, | \, l(\C^n) \subset W, l|_{\C^q} =0 \}.
\end{equation}
Altogether, $\ker d_{(U,V,W)}^*\theta$ is 
\begin{equation}
\{h+l\in  Hom(\C^n/U,U) \, | \, h(\C^n) \subset V, h|_{\C^p} =0, l(\C^n) \subset W, l|_{\C^q} =0 \}.
\end{equation}

b)Consider $\tilde{\theta}: \tilde{Z}(s,t)\to Gr(k,n)$. Note that $\tilde{Z}_{(s,t)} = \tilde{Z}_{(s,0)}\times_{Gr(k,n)}\tilde{Z}_{(0,t)}$. By definition, $\tilde{Z}_{(0,t)} = Z_{(0,t)}$. 

Fix a triple $(U,V,W)$ in $\tilde{Z}_{(s,t)}$.
Similarly to part a), $d^*_{(U,V,W)}\tilde{\theta} = \ker d^*_{(U,V)}\tilde{\theta}_{(s,0)} +\ker d^*_{(U,W)}\tilde{\theta}_{(0,t)}$. Note that $ \tilde{Z}_{(s,0)} $ is a set of the form $\{ (U,V) \, | \, U+\C^p \subset V \}$ in $Gr(k,n)\times Gr(k+p-s,n)$ 
 Now \cite{BFL}, Lemma 3.1 a), ("leftmost peak" case) describes the codifferential of the map $\tilde{\theta}: \tilde{Z}_{(s,0)} \to Gr(k,n)$ as follows 
\begin{equation}
\ker d_{(U,V)}^*\theta = \{ h\in Hom(\C^n/U,U) \, | \, h|_V = 0, h (\C^n)\subset \C^q  \}.
\end{equation} 
We conclude that 
$\ker d_{(U,V,W)}^*\tilde{\theta}$ is 
\begin{equation}
\{h+l\in  Hom(\C^n/U,U) \, | \, h|_V = 0, h (\C^n)\subset \C^q , l(\C^n) \subset W, l|_{\C^q} =0 \}.
\end{equation}\end{proof}

Let $U$ be a point in $Gr(k,n)$. Then $GL(p)\times Gl(q)\cdot U$ equals $Q(s,t)$ for some $s$ and $t$. Then $T_U Q(s,t)$ is equal to the subspace of $Hom(U,\C^n/U)$ that sends $U\cap \C^p$ to $\C^p/U\cap \C^p$ and $U\cap \C^q$ to $\C^q/U\cap \C^q$. Under the natural pairing between $Hom(U,\C^n/U)$ and $Hom(\C^n/U, U)$, $T^*_{Q(s,t)}Gr(k,n)_U$ is the subspace of $Hom(\C^n/U, U)$ that sends $\C^p/U$ to $U\cap \C^q$, $\C^q/U$ to $U\cap \C^p$ and $\C^p/U \cap \C^q/U$ to $0$.

We can represent a conormal vector in $T^*_{Q(s,t)}Gr(k,n)_U$ by a $k$ by $n-k$ matrix of the shape 

\begin{equation}
\kbordermatrix{&\C^p/U  & \C^p/U \cap \C^q/U & \C^q/U\\
U\cap \C^p &0 & 0 & * \\
U\cap \C^q & * & 0 & 0  \\
 & 0 & 0 &   0
}.
\end{equation}
We return to the proof of Proposition \ref{prop cone is empty}. 
\begin{proof}
a) We start with the case of $n-k\geq p$ and a resolution $\theta : Z(s,t) \to \ol{Q(s,t)}$. Fix a properly included orbit $Q(s',t')\subset Q(s,t)$ together with a point $U$ in $Q(s',t')$. Then $s'\geq s$ and $t'\geq t$ and one of the inequalities is strict. According to Lemma \ref{codifferential}, the subvariety $\cup_{\theta^{-1}(U)} \ker d^*_{(U,V,W)}\theta$ is a union of the subspaces 
\begin{equation}\label{rkrk}
\{h+l\in  Hom(\C^n/U,U) \, | \, h(\C^n) \subset V, h|_{\C^p} =0, l(\C^n) \subset W, l|_{\C^q} =0 \}
\end{equation}
for all pairs $(V,W)\in Gr(s,p)\times Gr(t,q)$ such that $V\subset U\cap \C^p, W\subset U\cap \C^q$. 
For an element $(U,m)$ in $T^*_{Q(s,t)}Gr(k,n)_U$ the maximal rank of the matrix $m$ is equal to $r=min(s',n-k-p+s')+min(t',n-k-q+t')$. We claim that the matrices in (\ref{rkrk}) are of the strictly smaller rank. Indeed, the condition $n-k\geq p$ implies $n-k-p+s'\geq s'$. Also $n-k-q+t'> t'$ since $p\geq q$ and $2k<n$. Thus $r=s'+t'$. The image conditions $h(\C^n) \subset V $ and $l(\C^n) \subset W$ decrease the maximal possible rank to $s+t$.

b) Consider the case of $\tilde{\theta} : \tilde{Z}(s,t) \to \ol{Q(s,t)}$ when $n-k<p$. As before, we take $U\in Q(s',t')\subset Q(s,t)$ where the inclusion is strict. By Lemma 2.1\ref{codifferential}, $\cup_{\tilde{\theta}^{-1}(U)} \ker d^*_{(U,V,W)}\tilde{\theta}$  is a union of the following subspaces 
\begin{equation}
\{h+l\in  Hom(\C^n/U,U) \, | \, h|_V = 0, h (\C^n)\subset \C^q , l(\C^n) \subset W, l|_{\C^q} =0 \}
\end{equation}
for all $(V,W)\in Gr(k,n)\times Gr(k+p-s,n)$ with the conditions $ U\oplus{\C^p} \subset V, W\subset U\cap \C^q $. As before, the maximal rank of a matrix in  $T^*_{Q(s,t)}Gr(k,n)_U$ is $r=min(s',n-k-p+s')+min(t',n-k-q+t')= min(s',n-k-p+s')+t'$. We have $n-k<p$ thus $r=n-k-p+s' + t'$. Notice that $\dim \C^n/U - \dim V/U = n-k-(k+p-s-k) = n-k-p+s$. This way, $\dim \C^n/U - \dim V/U <  n-k-p+s'.$ Thus both kernel $V\subset \ker h$ and image $l(\C^n) \subset W$ conditions decrease the maximal rank from $r$ to $n-k -p +s + t$.
\end{proof}

Proposition \ref{prop cone is empty} together with Remark \ref{remark} imply the first part of Theorem \ref{thm1}:
\begin{theorem} \label{pq theorem}
The characteristic cycles of IC-sheaves associated to  $GL(p)\times GL(q)$-orbits on $Gr(k,n)$ are irreducible.
\end{theorem}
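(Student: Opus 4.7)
The plan is to read off the theorem directly from Proposition \ref{prop cone is empty} together with the microlocal machinery recalled in Section \ref{sec char cycles}. For each pair $(s,t)$, the Barbasch-Evens construction supplies a small resolution of $\overline{Q(s,t)}$: either $\theta: Z(s,t) \to \overline{Q(s,t)}$ when $n-k \geq p$, or $\tilde\theta: \tilde Z(s,t) \to \overline{Q(s,t)}$ when $n-k \leq p$ (at least one of the two ranges always applies). Smallness gives $\mathcal{L}_{Q(s,t)} \cong R\theta_* \C$ up to shift, so the Brylinski formula combined with proper base change yields $m_{Q',Q(s,t)} = \chi_x(R\theta_* \Phi_{f\circ\theta}\C)$ for every proper sub-orbit $Q' = Q(s',t')$ and every sufficiently general conormal point $(x,\xi) \in T^*_{Q'} Gr(k,n)$ with local function $f$ vanishing on $Q'$ and $d_x f = \xi$.

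To kill each such multiplicity, I would fix a general $(x,\xi)$ and invoke Proposition \ref{prop cone is empty}, which asserts that the microlocal fiber of $\theta$ (or $\tilde\theta$) over $(x,\xi)$ is empty. By Remark \ref{remark}, the support of $\Phi_{f\circ\theta}\C$ on the preimage $\theta^{-1}(x)$ is then empty, so its proper pushforward has vanishing stalk at $x$ and $m_{Q',Q(s,t)} = 0$. Running this through every proper sub-orbit $Q'\subset \overline{Q(s,t)}$ eliminates every off-diagonal term in the characteristic-cycle decomposition from the introduction, leaving $CC(\mathcal{L}_{Q(s,t)}) = \overline{T^*_{Q(s,t)} Gr(k,n)}$, which is the desired irreducibility.

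The main obstacle --- establishing microlocal fiber emptiness for a generic conormal direction to any smaller orbit --- has already been dispatched by the rank-bound computation in the proof of Proposition \ref{prop cone is empty}, where the constraints $n-k \geq p$ (resp.\ $n-k \leq p$) and $p \geq q$ force the codifferential kernel to contain only matrices of rank strictly below the maximal rank $r$ achieved on $T^*_{Q(s',t')} Gr(k,n)_U$. Once that rank inequality is in hand, the theorem reduces to a purely formal deduction from the two cited statements, and no further work is needed.
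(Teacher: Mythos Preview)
Your proposal is correct and is exactly the paper's argument: the paper's proof of Theorem~\ref{pq theorem} is the single sentence that Proposition~\ref{prop cone is empty} together with Remark~\ref{remark} imply the result, and you have simply unpacked that sentence in detail. There is no difference in approach.
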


\section{Orbits for the Symplectic and Orthogonal Groups} \label{section so and sp}

\subsection{Rank stratifications for symmetric and skew-symmetric matrices}\label{section rank stratifications}
We start with a reminder on matrix rank stratifications. Let $X = Hom(\C^n,\C^n)$ be the space of $n\times n $ matrices. There is a non-degenerate symmetric bilinear form $\langle \cdot, \cdot\rangle $ on $X$  given by $\langle A, B \rangle = tr(AB^{T})$. Note that the space of symmetric matrices is orthogonal to the space of skew-symmetric matrices for this form, so the form is non-degenerate on both symmetric and skew-symmetric matrices.  

Let $M_{symm}\subset Hom(\C^n,\C^n)$ be a subspace of symmetric matrices. Consider the action of $GL(n)$ on $X$ defined by $g\cdot M = gMg^T$, where $g\in GL(n)$ and $M\in X$. There are $n+1$ orbits $O_0, O_1, \dots , O_n$. Each orbit $O_i$ consists of symmetric matrices of rank $i$. 

Similarly, define $M_{skew}$ to be the subspace of $n\times n$ skew-symmetric matrices. Let $m=\left \lfloor n/2\right \rfloor$. The $GL(k)$-action produces $m+1$ orbits $O_0, O_1, \dots, O_{m}$. Now each orbit $O_{i}$ consists of skew-symmetric matrices of rank $2i$.

The conormal space to an orbit can be easily described as follows. Let $x\in Hom(\C^n,\C^n)$ be symmetric (or skew-symmetric). Denote by $O$ the corresponding $GL(n)$-orbit through $x$.  Take $C$ to be a  symmetric (resp. skew-symmetric) matrix in $T_x Hom(\C^n,\C^n)$. Then $C$ is orthogonal to $T_x O$ if and only if $tr(C(Yx+xY^T)) = 0$ for any $Y$ in $  Hom(\C^n,\C^n)$. Using $tr(CYx) = tr(CxY^T)$ for $x,C$ symmetric (resp. skew-symmetric), we see this is equivalent to $x C= 0 $.

From Raicu \cite{Rai} and Lorincz-Walther \cite{LW} (see also Braden-Grinberg \cite{BG}), we know characteristic cycles for the matrix rank stratifications.
 As before, we denote by $\mathcal{L}_O$ the IC-extension of the trivial local system on the orbit $O$. 
\begin{theorem}\cite{Rai}\label{rank stratification}
\begin{itemize}
\item Let $O_0, O_1, \dots, O_{m}$ be the orbits in $M_{skew}$. Then each $CC(\mathcal{L}_{O_i})$ is irreducible, so that $CC(\mathcal{L}_{O_i}) = [\overline{T_{O_i}^*X}]$.
\item Let $O_0, O_1, \dots , O_n$ be the orbits in $M_{symm}$. Then  
$CC(\mathcal{L}_{O_i}) = [\overline{T_{O_i}^*X}]$ for $n-i$ even or  $i=0$, and $CC(\mathcal{L}_{O_i}) = [\overline{T_{O_i}^*X}] + [\overline{T_{O_{i-1}}^*X}]$  for $n-i$ odd.
\end{itemize}
\end{theorem}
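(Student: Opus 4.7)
The plan is to use the $D$-module structure of the coordinate rings of the determinantal varieties, following the approach of Raicu \cite{Rai} and L\H{o}rincz--Walther \cite{LW}, and to match the resulting characteristic cycles to the listed formulas.

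The first step is to identify the simple $GL_n$-equivariant regular holonomic $\mathcal{D}$-modules on $M_{symm}$ (resp.\ $M_{skew}$). By Kashiwara's equivalence and the fact that the orbits $O_i$ are simply connected, these simple modules are in bijection with the orbits and correspond via the Riemann--Hilbert correspondence to the IC-sheaves $\mathcal{L}_{O_i}$; hence $CC(\mathcal{L}_{O_i})$ equals the characteristic cycle of the corresponding simple $\mathcal{D}$-module.

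The second step is to compute these characteristic cycles by analyzing the composition series of the local cohomology sheaves $\mathcal{H}^*_{\overline{O_i}}(\mathcal{O}_X)$ and the $V$-filtration along successive strata. The crucial input is the $b$-function of the defining equation. In the skew-symmetric case, the $b$-function of the Pfaffian has only integer roots, which forces the successive quotients in the $V$-filtration to remain simple; hence $CC(\mathcal{L}_{O_i}) = [\overline{T^*_{O_i} X}]$ with no extra conormals. In the symmetric case the $b$-function of the determinant contains half-integer roots, and these cause adjacent strata $O_i$ and $O_{i-1}$ to interact exactly when their codimensions differ by an odd integer --- that is, when $n - i$ is odd --- producing the extra summand $[\overline{T^*_{O_{i-1}} X}]$ with multiplicity one.

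The main obstacle is the symmetric case, where establishing both the irreducibility for $n - i$ even and the precise coefficient $1$ for $n - i$ odd requires a detailed calculation with the $V$-filtration. A purely geometric alternative I would attempt is a study of the Kempf resolution
\[
\pi_i : Z_i = \{(A, V) \in X \times Gr(n - i, n) \mid V \subset \ker A\} \to \overline{O_i},
\]
(whose codifferential can be computed in parallel to Lemma~\ref{codifferential}) combined with an \'etale double cover of $Z_i$ parametrizing a square root of the discriminant of the induced symmetric form on $\C^n / \ker A$: this cover exists precisely when $n - i$ is even and can be used to convert $\pi_i$ into a small map to which Remark~\ref{remark} applies, while for $n - i$ odd the obstruction to its existence would supply exactly the extra conormal component predicted by the theorem.
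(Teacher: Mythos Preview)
The paper does not prove this theorem: it is quoted as a known result from Raicu \cite{Rai} and L\H{o}rincz--Walther \cite{LW} (with Braden--Grinberg \cite{BG} as an earlier reference), and is then used as a black box in the proof of Theorem~\ref{so sp orbits theorem}. There is therefore no proof in the paper to compare your proposal against.

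As for your sketch itself, its overall shape --- reducing to the classification of simple equivariant $\mathcal{D}$-modules and reading off characteristic cycles from the $b$-function and $V$-filtration analysis --- is indeed the strategy in \cite{Rai} and \cite{LW}. However, one of your stated facts is wrong: the $GL_n$-orbits on $M_{symm}$ are \emph{not} simply connected. The stabilizer of a rank-$i$ symmetric matrix has component group $\mathbb{Z}/2$ (coming from $O(i)$), so each orbit $O_i$ with $i\ge 1$ carries a nontrivial equivariant local system, and the simple equivariant $\mathcal{D}$-modules are not in bijection with the orbits. This is not a minor slip: the presence of these extra simples is exactly what makes the characteristic cycle of $\mathcal{L}_{O_i}$ reducible in the symmetric case, and any correct argument must track them. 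Your ``geometric alternative'' via a double cover of the Kempf resolution is suggestive of this $\mathbb{Z}/2$ phenomenon, but it is only a heuristic as written; the Kempf resolution $\pi_i$ is not small, so Remark~\ref{remark} does not apply directly, and converting the parity obstruction into an actual multiplicity computation would require substantially more work than you indicate.
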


\subsection{Non-characteristic maps}
We recall the definition of a non-characteristic map here. Consider a map $f: X\to Y$ between smooth algebraic varieties. Let $\omega_f:X\times_Y T^*Y \to T^*Y$ and $\rho_f: X\times _Y T^*Y \to T^*X$ be the canonical morphisms. 
\begin{definition}
Let $V$ be a closed conic subset of $T^*Y$. We say that $f: X\to Y$ is non-characteristic for $V$ if $\ker \rho_f \cap \omega^{-1}_f(V)$ is in the zero section of $X\times_Y T^*Y$. If $\mathcal{G}^\bullet$ is an object in $D^b(Y)$ and $f$ is non-characteristic for $Ch(\mathcal{G}^\bullet)$ we say that $f$ is non-characteristic for $\mathcal{G}^\bullet$.
\end{definition}
Note that if $f$ is smooth, then $f$ is non-characteristic for any closed conic subset. If $f:X\to Y$ is a closed embedding and $V\subset T^*Y$ is a closed conic subset then $f$ is non-characteristic for $V$ if and only if 
\begin{equation} \label{noncharact closed embedding}
(V)_x\cap (T^*_X Y)_x = \{0\}
\end{equation}
for all $x\in X$.
We will use the following 
\begin{theorem}\cite{HK} \label{HottaK}
Let $f:X\to Y$ be a map between smooth algebraic varieties. Consider $\mathcal{G}^\bullet$ a perverse sheaf on $Y$. If $f$ is non-characteristic for $\mathcal{G}^\bullet$ then 
\begin{equation}
CC(f^{-1}\mathcal{G}^\bullet) = \rho_f \omega_f^{-1}(CC(\mathcal{G}^\bullet)).
\end{equation}
\end{theorem}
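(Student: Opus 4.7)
The plan is to prove Theorem \ref{HottaK} by the standard reduction to two elementary cases via the factorization of $f$ through its graph. Write $f$ as the composition $X \xrightarrow{\gamma_f} X\times Y \xrightarrow{pr_2} Y$, where $\gamma_f(x)=(x,f(x))$ is a closed embedding and $pr_2$ is a smooth projection. A direct check on cotangent spaces shows that the correspondences satisfy $\rho_f\omega_f^{-1} = \rho_{\gamma_f}\omega_{\gamma_f}^{-1}\circ\rho_{pr_2}\omega_{pr_2}^{-1}$ as operations on cycles, and that the non-characteristic condition for $f$ at $\mathcal{G}^\bullet$ is equivalent to the non-characteristic condition for $\gamma_f$ at $pr_2^{-1}\mathcal{G}^\bullet$. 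Hence it suffices to verify the formula separately for a smooth projection and for a non-characteristic closed embedding.

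For the smooth projection $pr_2$, every object is non-characteristic (the map $\rho_{pr_2}$ is injective on fibres, so $\ker \rho_{pr_2}$ lies in the zero section). The complex $pr_2^{-1}\mathcal{G}^\bullet[\dim X]$ is perverse on $X\times Y$, and since $T^*(X\times Y)$ decomposes canonically as $T^*X\boxplus T^*Y$, its characteristic cycle is obtained from $CC(\mathcal{G}^\bullet)$ by taking the product with the zero section of $T^*X$; this is precisely $\rho_{pr_2}\omega_{pr_2}^{-1}(CC(\mathcal{G}^\bullet))$. This case is essentially the Künneth formula for characteristic cycles, and it can be seen at the Morse-theoretic level because any test function on the base lifts to a test function on the product with the same vanishing cycles.

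The substantive case is the closed embedding $i=\gamma_f$ with $i$ non-characteristic for $\mathcal{H}^\bullet:=pr_2^{-1}\mathcal{G}^\bullet[\dim X]$. The non-characteristic hypothesis (\ref{noncharact closed embedding}) first implies that $i^{-1}\mathcal{H}^\bullet$ is perverse on $X$ (up to the standard shift). To compute $CC(i^{-1}\mathcal{H}^\bullet)$ at a sufficiently general covector $(x,\xi)\in T^*X$, I would use the Morse-theoretic definition: pick a function $\varphi$ on $X$ with $d_x\varphi=\xi$, and read off the multiplicities from $\chi_x(\Phi_\varphi\, i^{-1}\mathcal{H}^\bullet)$, exactly as was recalled in Section \ref{sec char cycles}. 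Non-characteristicity means that $\varphi$ extends to a function $\tilde\varphi$ on a neighbourhood of $X$ in $X\times Y$ whose graph in $T^*(X\times Y)$, along $X$, meets $CC(\mathcal{H}^\bullet)$ only in the fibre $\omega_i^{-1}(\xi)\cap CC(\mathcal{H}^\bullet)$, and does so transversally in the stratified sense. The nearby/vanishing cycle functor then commutes with $i^{-1}$ (this is the content of non-characteristic restriction in Kashiwara--Schapira), so $\chi_x(\Phi_\varphi i^{-1}\mathcal{H}^\bullet) = \sum \chi_{(x,y)}(\Phi_{\tilde\varphi}\mathcal{H}^\bullet)$ with the sum taken over the finitely many points in the microlocal fibre; each term is exactly the multiplicity of the corresponding Lagrangian in $CC(\mathcal{H}^\bullet)$. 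Summing reconstructs the cycle $\rho_i\omega_i^{-1}(CC(\mathcal{H}^\bullet))$ with the correct multiplicities.

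The principal obstacle is the final step: passing from a set-theoretic equality of supports (which follows immediately from (\ref{noncharact closed embedding})) to an equality of \emph{cycles} with the correct multiplicities. This requires the full microlocal machinery of Kashiwara--Schapira, in particular the compatibility of $\Phi$ with non-characteristic inverse image, and a transversality argument showing that the generic covector condition is compatible with every Lagrangian component of $CC(\mathcal{G}^\bullet)$ simultaneously. The reductions in the first two steps are routine; the multiplicity bookkeeping in the third is the technical heart of the proof.
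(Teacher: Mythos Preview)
The paper does not prove this theorem: it is quoted from \cite{HK} and used as a black box, so there is no proof in the paper to compare your proposal against. Your sketch follows the standard route (factor $f$ through its graph, treat the smooth projection via K\"unneth, and then handle the non-characteristic closed embedding via the Morse-theoretic/vanishing-cycle description of $CC$), which is indeed how such results are established in the literature; the outline is sound and you have correctly identified the multiplicity bookkeeping for the closed embedding as the only nontrivial step.
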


\subsection{Characteristic Cycles and Degeneracy Loci}

Let $E\to X$ be a vector bundle over a smooth variety $X$ with a fiber $V$. 
Suppose that $E = E_{\leq n}\supset E_{\leq n-1} \supset \dots \supset E_{\leq 0}$ is a stratification by closed subsets with smooth strata $E_i=E_{\leq i}- E_{\leq i-1}$, $i=0,\dots, n$.  We assume that this restricts to a stratification of $V = V_{\leq n}\supset V_{\leq n-1} \supset \dots \supset V_{\leq 0}$. Let  $s:X\to E$ be a section of $E$. Then the subvariety $X_{\leq k} = s^{-1}(E_{\leq k})$ in $X$ is called a degeneracy locus. For example, if $E=Hom(F,G)$, where $F$ and $G$ are two vector bundles on $X$, then the stratification on $E$ can be taken to be the matrix rank stratification in each $Hom(F,G)_x$ for $x$ in $X$. In this case, $X_{\leq k}$ is exactly the set $\{x\in X \, | \, \text{rank } s_x \leq k\}$.

\begin{lemma}\label{transversality}
Suppose $s$ is transversal to each of the sets $E_{i}$, $i=0,\dots, n$. Then $s^{-1}$ preserves characteristic cycles, i.e. if $CC(\mathcal{L}_{V_{k}}) = \Sigma _{	i\leq k} m_{i,k} \, \overline{T_{V_{i}}^*V} $, then $CC(\mathcal{L}_{X_k}) = \Sigma _{	i\leq k} m_{i,k} \, \overline{T_{X_{i}}^*X}$ with the same coefficients $m_{i,k}$.
\end{lemma}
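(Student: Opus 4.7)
The plan is to lift the problem from the fiber $V$ to the total space $E$, pull back the corresponding IC-sheaf along the section $s$, and apply the Hotta--Kashiwara non-characteristic pullback formula (Theorem~\ref{HottaK}). Note that a section $s\colon X\to E$ is automatically a closed embedding (it is a section of $\pi\colon E\to X$), so we can use the closed-embedding criterion (\ref{noncharact closed embedding}) to check that $s$ is non-characteristic.

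First I would promote the fiberwise IC-sheaves to the total space. Let $\mathcal{L}_{E_{k}}$ denote the IC-extension of the constant sheaf on $E_{k}$. Because the stratification $\{E_i\}$ restricts on each fiber to the stratification $\{V_i\}$ and $E\to X$ is locally trivial, we have $\mathcal{L}_{E_k}\simeq \mathbb{C}_X\boxtimes \mathcal{L}_{V_k}$ up to a shift on each trivialization, and accordingly $T^*_{E_i}E$ is locally the zero section in the $X$-direction times $T^*_{V_i}V$. In particular
\[
CC(\mathcal{L}_{E_k})=\sum_{i\leq k} m_{i,k}\,[\overline{T^*_{E_i}E}]
\]
with exactly the same coefficients $m_{i,k}$ as appear in $CC(\mathcal{L}_{V_k})$.

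Second I would verify that $s\colon X\hookrightarrow E$ is non-characteristic for $\mathcal{L}_{E_k}$. By (\ref{noncharact closed embedding}) this reduces to showing that for every $x\in X$ and every $i\leq k$,
\[
(T^*_{E_i}E)_{s(x)}\cap (T^*_X E)_{s(x)}=\{0\}.
\]
Taking annihilators, this is equivalent to $T_{s(x)}E_i + s_*(T_xX)=T_{s(x)}E$, which is precisely the hypothesis that $s$ is transversal to $E_i$. Theorem~\ref{HottaK} then yields $CC(s^{-1}\mathcal{L}_{E_k})=\rho_s\,\omega_s^{-1}(CC(\mathcal{L}_{E_k}))$. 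A local computation in the trivialization $E=X\times V$ shows that under transverse intersection, $\rho_s\omega_s^{-1}$ carries $\overline{T^*_{E_i}E}$ to the conormal $\overline{T^*_{X_i}X}$ of the transverse preimage $X_i=s^{-1}(E_i)$ with multiplicity one, so the coefficients $m_{i,k}$ transfer verbatim.

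Finally I would identify $s^{-1}\mathcal{L}_{E_k}$ with $\mathcal{L}_{X_k}$ up to cohomological shift. Because $s$ is transversal to every stratum of the Whitney stratification of $E$, the pullback of an IC-sheaf along $s$ is again an IC-sheaf on the transverse preimage, and a shift does not affect the characteristic cycle. Combining this with the previous step gives $CC(\mathcal{L}_{X_k})=\sum_{i\leq k} m_{i,k}\,[\overline{T^*_{X_i}X}]$, as required. The main technical step is the transverse-pullback computation identifying $\rho_s\omega_s^{-1}\overline{T^*_{E_i}E}$ with $\overline{T^*_{X_i}X}$ with multiplicity one; the verification of the non-characteristic condition and the identification of pulled-back IC-sheaves are standard once this is in hand.
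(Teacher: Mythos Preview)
Your proposal is correct and uses the same ingredients as the paper's proof: transversality implies the non-characteristic condition, Theorem~\ref{HottaK} then computes the pulled-back cycle, and transversal pullback of an IC-sheaf is again an IC-sheaf (normally nonsingular in the sense of \cite{GM}). The only organizational difference is that the paper trivializes $E$ immediately and works with the local map $s\colon U\to V$ into the fiber, whereas you lift the IC-sheaves to the total space $E$ and pull back along the global section $s\colon X\hookrightarrow E$; your route has the minor advantage that $s$ is then a genuine closed embedding, so the criterion (\ref{noncharact closed embedding}) applies literally.
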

\begin{proof}
Notice that $s$ being transversal to each $E_i$ implies $X_{i} = s^{-1}(E_{i})$ is smooth and the expression $\mathcal{L}_{X_{i}}$ makes sense. In a neighborhood $U$ of $x\in X$, $s$ takes values in $V$ and is transversal to the stratification of $V$ by $V_i$'s. Thus $s$ is a normally nonsingular map from $U$ to $V$ in the sense of \cite{GM} and $s^{-1}\mathcal{L}_{V_i} = \mathcal{L}_{X_i}[\dim V - \dim X]$. The same transversality implies that $s$ is a non-characteristic map for any constructible complex on $V$. In particular, $s$ is non-characteristic for each of the $\overline{T_{V_{i}}^*V}$ using Equation (\ref{noncharact closed embedding})	. Notice that $\rho_s \omega_s ^{-1}(\overline{T_{V_{i}}^*V}) = \ol{T_{X_{i}}^*X}$ and we can use Theorem \ref{HottaK} to conclude the proof. 
\end{proof}

In simple terms, Lemma \ref{transversality} can be stated as follows. Suppose the section $s$ is general enough. Then the structure of singularities for degeneracy loci is the same one as for the fiber $V$.

\subsection{Orbits for the Symplectic and Orthogonal Groups}
Let $B$ be a nondegenerate bilinear form on $\C^n$. We assume $B$ is symmetric or skew-symmetric. Let $K$ denote the isometry group of $B$. The $K$-orbits on $Gr(k,n)$ can be described as follows 
\begin{equation}\label{definition of Qi orbits}
Q(i) = \{ U\in Gr(k,n) | \dim(rad(U)) = i \}.
\end{equation}
If $B$ is skew-symmetric, then $K=Sp(n)$ and the sets $Q(i)$, where $0\leq i \leq k$ and $i=k(\text{mod} \,2) $, are precisely the $Sp(n)$-orbits. When $B$ is symmetric, then $K = O(n)$ and there are $k+1$ orbits $Q(0), Q(1),\dots, Q(k)$. However, all $O(n)$-orbits $Q(i)$ are connected, except for $Q(n/2)$ ($n$ is even), which consists of two closed $SO(n)$-orbits (\cite{ACGH}, p. 102). In what follows, we use the same notation $Q(0), Q(1),\dots, Q(k)$ for the set of $SO(n)$-orbits, remembering that the $O(n)$-orbit $Q(n/2)$ actually consists of two closed $SO(n)$-orbits for an even $n$.

Note that $Q(i)\subset\ol{Q(j)}$ if and only if $i\geq j$. One has a resolution of singularities \cite{BE} analogous to (\ref{section pq orbits}). Consider 
\begin{equation}
Z_i = \{ (U,V)\in Gr(k,n)\times Gr(i,n) |  V\subset rad(B|_U) \}
\end{equation}
with $\theta :Z_i \to Q(i)$ a projection to the first factor. However, these resolutions are not small in general and we cannot employ the strategy from Section \ref{sec char cycles}. Instead we relate $K$-orbits to matrix rank stratifications from Section  \ref{section rank stratifications}. 

In what follows, we describe the $K$-orbits as degeneracy loci. 
Let $\mathcal{S}$ be the tautological bundle over a Grassmannian $Gr(k,n)$. Form the bundle $Hom(\mathcal{S},\mathcal{S}^*)$ over  $Gr(k,n)$. The choice of a bilinear form $B$ corresponds to a section $s: Gr(k,n) \to Hom(\mathcal{S},\mathcal{S}^*)$ via $U\mapsto B|_{U}:U\to U^*$. 
Now $rk B|_U = k - rad(U)$ and each $Q(i)$ is a preimage of rank $(k - i)$ matrices in $Hom(\mathcal{S},\mathcal{S}^*)$.

Given $U \subset V$, we identify $Hom(U,U^*)$ with bilinear forms on $U$ in the usual manner.   Thus, it makes sense to speak of symmetric (skew-symmetric) matrices in $Hom(U,U^*)$. We denote by $M_{U,symm}$  ($M_{U,skew}$) the subspace of symmetric (skew-symmetric) matrices in $Hom(U,U^*)$. When $U$ is clear from the context, we omit the subscript $U$ from $M_{U,symm}$ and write simply $M_{symm}$. 

Let $U$ be the span $\langle e_1,\dots,e_k \rangle$ in $Gr(k,n)$ with $k\geq n-k$, identify $\C^n/U$ with $\langle e_{k+1},\dots,e_n \rangle$. This gives an identification 
\begin{multline*}
T_U(\Gr(k,n)) = Hom(U,\C^n/U) = Hom(U, \langle e_{k+1}, ..., e_n\rangle) \\
=\{v+A(v) : v\in U, A \in Hom(U, \langle e_{k+1}, ..., e_n \rangle) \},
\end{multline*}
 which is an affine open neighborhood of $U$ in $\Gr(k,n)$. 

\begin{proposition}\label{prop map sB}  
 Choose $B$ to be symmetric (or skew-symmetric). In the neighborhood of $U$, the map $s:Hom(U,\C^n/U)\to Hom(U,U^*)$ is transversal to the rank stratification of symmetric (resp. skew-symmetric) matrices.
\end{proposition}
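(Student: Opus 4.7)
The plan is to exploit the $K$-equivariance of the global section $s\colon \Gr(k,n) \to Hom(\mathcal{S}, \mathcal{S}^*)$ to reduce the transversality statement on the whole affine chart around $U$ to a single check per $K$-orbit. Since $s$ is $K$-equivariant and the rank stratification of each fiber is preserved by the induced $K$-action (rank being invariant under isomorphism), transversality of $s$ at a point $A$ of the chart is equivalent (via a chart change) to transversality at the origin of some chart centered at a $K$-representative of $U_A$. Hence it will suffice to verify the claim at the origin $A = 0$ of an adapted chart for one representative $U_i$ of each orbit $Q(i)$.

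For such a representative I will write $U_i = R \oplus W$ with $R = \mathrm{rad}(U_i)$ of dimension $i$ and $W$ a $B$-nondegenerate complement, and extend this to a decomposition $\C^n = R \oplus W \oplus R' \oplus V$ in which $R'$ is totally isotropic and hyperbolically dual to $R$, and $V$ is perpendicular to $R \oplus R' \oplus W$ with $B|_V = B_V$ nondegenerate. In this basis $B$ has blocks $\pm I$ on the $R,R'$ off-diagonals (sign depending on the symmetric vs. skew case), $B_W$ on $W,W$, $B_V$ on $V,V$, and zeros elsewhere. Splitting $A \in Hom(U_i, \C^n/U_i)$ into the four blocks $A_{11}, A_{12}, A_{21}, A_{22}$ corresponding to $R \oplus W \to R' \oplus V$, direct matrix multiplication in $s(A) = \tilde A^T B \tilde A$ (where $\tilde A$ denotes the $n\times k$ matrix with top block $I_k$ and bottom block $A$) will give
\begin{equation*}
s(0) = \begin{pmatrix} 0 & 0 \\ 0 & B_W \end{pmatrix}, \qquad ds_0(H) = \begin{pmatrix} H_{11} + H_{11}^T & H_{12} \\ H_{12}^T & 0 \end{pmatrix}
\end{equation*}
in the symmetric case, with an entirely analogous antisymmetrized formula in the skew case.

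To finish, I will compare the image of $ds_0$ with the tangent space to the rank $(k-i)$ orbit through $s(0)$, which is $\{Y s(0) + s(0) Y^T : Y \in M_{k \times k}\}$ from the $GL(k)$-action $g \cdot x = g x g^T$ recalled at the start of \S3.1. Block multiplication with $s(0) = \mathrm{diag}(0, B_W)$ shows that this tangent space consists of matrices with zero $(1,1)$ block, arbitrary $(1,2)$ block $Y_{12} B_W$ (since $B_W$ is invertible), and $(2,2)$ block $Y_{22} B_W + B_W Y_{22}^T$ ranging over all symmetric $(k-i) \times (k-i)$ matrices (this is the tangent space at $B_W$ to the open $GL(k-i)$-orbit of nondegenerate forms). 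The image of $ds_0$ fills in the remaining symmetric $(1,1)$ block via $H_{11} \mapsto H_{11} + H_{11}^T$, so the two subspaces together span the symmetric part of $Hom(U_i, U_i^*)$. The skew-symmetric case runs formally in parallel, with symmetrizations replaced by antisymmetrizations throughout.

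The main obstacle will be keeping the block bookkeeping straight so that $ds_0$ and the orbit tangent take the tractable forms above; once the basis is adapted to $\mathrm{rad}(U_i)$ and to $B$, the nondegeneracy of $B_W$ on the nonradical part of $U_i$ is the single geometric input that makes the dimension count work out.
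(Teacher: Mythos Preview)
Your proof is correct but takes a genuinely different route from the paper. The paper fixes the single chart around $U=\langle e_1,\dots,e_k\rangle$ (which lies in the closed orbit) and, under the standing hypothesis $k\geq n-k$, computes that for \emph{every} $A$ in this chart the matrix $s(A)$ has a fixed invertible $(2k-n)\times(2k-n)$ anti-diagonal block $J$ in the lower-right corner; transversality at an arbitrary $x=s(A)$ is then verified via the conormal criterion from \S\ref{section rank stratifications}: any $C$ orthogonal to both $T_x\,\mathrm{im}(s)$ and $T_xO$ must be supported in the $J$-corner and satisfy $xC=0$, which forces $C=0$. You instead use $K$-equivariance of the global section to reduce to the origin of one adapted chart per $K$-orbit, then compute $ds_0$ and the orbit tangent space explicitly in the block decomposition $U_i=R\oplus W$ and check they span. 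The paper's argument is uniform in $A$ over a single chart but needs $k\geq n-k$ (the remaining case is handled later by Grassmannian duality); your argument avoids that restriction and keeps the differential computation linear, at the price of setting up a separate adapted basis for each orbit and justifying the equivariance reduction carefully.
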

\begin{proof}
 
We start with the case of $SO(n)$ acting on $Gr(k,n)$. Here $n$ can be even or odd. Take $J$ to be the $n\times n$ matrix 
$\begin{pmatrix} 
 &  & 1\\
 & \udots & \\
1 &  & 
\end{pmatrix}$.  This gives us the non-degenerate symmetric form $B(v,w) = v^T J w$ on $\C^{n}$. Take a $k$-plane $U = \langle e_1,\dots,e_k \rangle$. Then the neighborhood  $Hom(U,\C^n/U)$ of $U$ meets a closed $SO(n)$-orbit $Q(k)$ through the point $U$. In the neighborhood $Hom(U,\C^n/U)$, the  section $s :Hom(U,\C^n/U) \to M_{U,symm}$ is 
\begin{equation}
A\mapsto \left( (v,w)\mapsto (v+Av)^T J (w+Aw)  \right),
\end{equation}
where $A\in Hom(\langle e_1,\dots,e_k \rangle,\langle e_{k+1},\dots, e_n\rangle )$.
For $v=(v_1, \dots , v_k)$ and $w=(w_1,\dots,w_k)$ in $U$,
\begin{equation}
s(A)(v,w) = \sum_{i=1}^{n-k} v_i (Aw)_{n-i+1} + \sum_{i=n-k+1}^{k} v_i w_{n-i+1} +  \sum_{i=k+1}^{n} (Av)_i w_{n-i+1}.
\end{equation}
As a symmetric $k\times k$ matrix, $s(A)$ has the form
\begin{equation} \label{shape of sbA} 
\begin{pmatrix} U & Y \\ Z & J \end{pmatrix},
\end{equation} 
where $U$ is a $n-k$ by $n-k$ matrix, 
$Y$ is $n-k$ by $2k-n$, $Z$ is $2k-n$ by $n-k$, and $J$ is the $2k-n$ by $2k-n$ matrix with $1$'s
along the antidiagonal, as above.

We claim that $s$ is transversal to any orbit $O$ in $M_{symm}$. Consider $x\in S\cap O$ and $C\in T_x M_{symm}$. Suppose C is perpendicular both to the tangent space $T_x im(s)$ and to $T_x O$. Now  $C\perp T_x im(s)$ implies that $C$ is of the form 
\begin{equation}\label{shape of C}
\begin{pmatrix} 0 & 0 \\ 0 & W \end{pmatrix},
\end{equation}
where the blocks have the same shape as in Equation (\ref{shape of sbA}).
 Recall that by Section \ref{section rank stratifications} $C$ being perpendicular to $T_x O$ is equivalent to $xC=0$. Equations (\ref{shape of C}) and (\ref{shape of sbA}) together with $xC=0$ imply $C=0$.

 The case of $Sp(n)$, $n$ is even, is analogous. Take $J$ to be the $n\times n$ matrix  
\begin{equation}
\begin{pmatrix} 
 &  &  &  &  & 1\\
 &  &  &  & \udots & \\
 &  &  & 1 &  & \\
 &  & -1 &  &  & \\
 & \udots &  &  &  & \\
-1 &  &  &  &  & \\
\end{pmatrix}.
\end{equation}
This gives us the non-degenerate skew-symmetric form $B(v,w) = v^T J w$ on $\C^{n}$.
Then $s(A)$ is a $k$ by $k$ skew-symmetric matrix of the following shape: 
\begin{equation} 
\begin{pmatrix} U & Y \\ Z & J \end{pmatrix}.
\end{equation} 
Here $U$ is a $n-k$ by $n-k$ matrix, 
$Y$ is $n-k$ by $2k-n$, $Z$ is $2k-n$ by $n-k$, and $J$ is the $2k-n$ by $2k-n$ matrix filled with $1$'s and $-1$'s
along the antidiagonal, as above. The rest of the argument goes through without changes.
\end{proof}

\begin{remark}
Recall that when $n$ is even and $k=n/2$ the set $Q(k)$ is a union of two closed $SO(n)$-orbits in $Gr(k,n)$. One of them contains the point $U=\langle e_1,\dots,e_n \rangle$ and another one - the point $V=\langle e_{n+1},\dots,e_{2n}\rangle$ in $Gr(k,n)$. A similar computation shows that Proposition \ref{prop map sB} is true if one replaces $U$ with $V$ in $Gr(k,n)$. 
\end{remark}

\begin{theorem} \label{so sp orbits theorem}
a) Consider the $Sp(n)$-orbits  $Q(i)$ on $Gr(k,n)$. Then, the characteristic cycle of  $\mathcal{L}_{Q(i)}$ is irreducible.

\noindent b) Consider the $SO(n)$-orbits $Q(0), Q(1),\dots, Q(k)$ on $Gr(k,n)$. Then, the characteristic cycle of  $\mathcal{L}_{Q(i)}$ is irreducible for $i$ even or $i=k$, and for $i$ odd $$CC(\mathcal{L}_{Q(i)})= \ol{T^*_{Q(i)} Gr(k,n)} + \ol{T^*_{Q(i+1)} Gr(k,n)}.$$  When $n$ is even and $k=n/2$, the set $Q(k)$ is a union of two closed $SO(n)$-orbits. Hence the characteristic cycle of $\mathcal{L}_{Q(k-1)}$ has three irreducible components when $k$ is even.
\end{theorem}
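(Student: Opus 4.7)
The plan is to transport the characteristic cycle computation from the Grassmannian to the space of $k\times k$ (skew-)symmetric matrices by means of the degeneracy locus description. The section $s$ identifies $Q(i) = s^{-1}(O_{k-i})$ in the symmetric case and $Q(i) = s^{-1}(O_{(k-i)/2})$ in the skew-symmetric case (where $k-i$ is even, by the constraint on $i$). Proposition \ref{prop map sB} establishes transversality of $s$ to the rank stratification at every point of an affine neighborhood of $U_0=\langle e_1,\dots,e_k \rangle$. Since this neighborhood meets every $K$-orbit and since both $s$ and the stratification are $K$-equivariant (the form $B$ is $K$-invariant by construction), $K$-translation propagates transversality to all of $Gr(k,n)$. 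Lemma \ref{transversality} then matches the coefficients of $CC(\mathcal{L}_{Q(i)})$ with those of $CC(\mathcal{L}_{O_{k-i}})$, with $\overline{T^*_{O_{k-j}}X}$ on the matrix side corresponding to $\overline{T^*_{Q(j)}Gr(k,n)}$ on the Grassmannian side.

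For part (a), the skew-symmetric case of Theorem \ref{rank stratification} asserts irreducibility of every $CC(\mathcal{L}_{O_j})$, which transports immediately to irreducibility of every $CC(\mathcal{L}_{Q(i)})$ for $Sp(n)$. For part (b), I apply the symmetric case of Theorem \ref{rank stratification} with matrix size $k$. It gives irreducibility of $CC(\mathcal{L}_{O_{k-i}})$ exactly when $k-(k-i)=i$ is even or $k-i=0$, i.e.\ when $i$ is even or $i=k$, matching the claim. For $i$ odd with $i<k$, Theorem \ref{rank stratification} yields $CC(\mathcal{L}_{O_{k-i}}) = \overline{T^*_{O_{k-i}}X} + \overline{T^*_{O_{k-i-1}}X}$, which pulls back under $s$ to $\overline{T^*_{Q(i)}Gr(k,n)} + \overline{T^*_{Q(i+1)}Gr(k,n)}$.

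The main subtlety will be the exceptional case $n=2k$ with $k$ even. Here $Q(k)$ decomposes as a disjoint union of two connected closed $SO(n)$-orbits $Q(k)_\pm$, so the cycle $\overline{T^*_{Q(k)}Gr(k,n)}$ has two distinct irreducible components $\overline{T^*_{Q(k)_\pm}Gr(k,n)}$. The degeneracy locus side uses only the $O(n)$-invariant form $B$ and is blind to this $SO(n)$-refinement. When we pull back $CC(\mathcal{L}_{O_1})$ in the case $i=k-1$ (which is odd), the term $\overline{T^*_{O_0}X}$ transports to the full cycle $\overline{T^*_{Q(k)}Gr(k,n)}$, which then splits into two irreducible pieces on the $SO(n)$-refined side; together with $\overline{T^*_{Q(k-1)}Gr(k,n)}$ this accounts for exactly three irreducible components. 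The consistency check here is that $\mathcal{L}_{Q(k-1)}$ is unambiguous because $Q(k-1)$ remains a single smooth $SO(n)$-orbit whose closure contains both components $Q(k)_\pm$, so Lemma \ref{transversality} still applies verbatim and the splitting only affects how the resulting cycle decomposes into irreducible summands.
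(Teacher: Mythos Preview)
Your approach is essentially the paper's: realize the $K$-orbits as degeneracy loci via $s$, invoke Proposition~\ref{prop map sB} for transversality, and then pull back the matrix characteristic cycles of Theorem~\ref{rank stratification} through Lemma~\ref{transversality}. The translation of the parity condition and the handling of the three-component exception are correct.

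Two small gaps remain. First, Proposition~\ref{prop map sB} is stated under the standing hypothesis $k\geq n-k$ (the block form of $s(A)$ uses a $(2k-n)\times(2k-n)$ antidiagonal block), so your argument as written only covers that range; the paper dispatches the complementary range $k\leq n-k$ by the $K$-equivariant identification $Gr(k,\C^n)\cong Gr(n-k,\C^n)$ coming from $B$. Second, when $n=2k$ your claim that ``this neighborhood meets every $K$-orbit'' is not quite right for $K=SO(n)$: the affine chart around $U_0$ consists of $k$-planes transverse to $\langle e_{k+1},\dots,e_{2k}\rangle$, and an isotropic such plane lies in only one of the two $SO(n)$-components of $Q(k)$, so $SO(n)$-translation from this chart misses the other component. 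The paper's Remark after Proposition~\ref{prop map sB} supplies the second chart (around $\langle e_{k+1},\dots,e_{2k}\rangle$) to cover it; alternatively one can use $O(n)$-equivariance of $s$ and the stratification to pass between the two components.
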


\begin{proof}
First assume that $k\geq n-k$ as in Proposition  \ref{prop map sB}. Recall that $s:Hom(U,\C^n/U) \to Hom(U,U^*)$ takes $U$ to $B|_{U}$ and $Q(i)=s^{-1} (O(k-i))$ in both cases. We start with the case a). By Proposition  \ref{prop map sB}, $s$ is transversal to the rank stratification of symmetric $k$ by $k$ matrices. Now Lemma \ref{transversality} allows us to pull back characteristic cycles from Theorem \ref{rank stratification} and we are done. The proof of part b) is analogous to part a). 
To deal with the case when $k \leq n-k$ notice that $Gr(k, \C^n)$ is $K$-equivariantly isomorphic to $Gr(n-k, \C^n)$, using the $K$-isomorphisms $Gr(k, \C^{n}) \to Gr(n-k, (\C^{n})^*)$, and $\C^{n}\to (\C^{n})^*$ given by the form $B$.
\end{proof}

\Addresses
\end{document}